\DeclareMathOperator{\id}{id}
\DeclareMathOperator{\mor}{Mor}
\DeclareMathOperator{\spec}{Spec}
\DeclareMathOperator{\ob}{ob}
\DeclareMathOperator{\pts}{pts}
\newcommand{\cat}[1]{\mathbf{#1}}
\newcommand{\sets}{\mathbf{Sets}}
\newcommand{\grmcat}[1]
\newtheorem{lemma}{Lemma}
\newtheorem{definition}{Definition}
\newtheorem{example}{Example}
\begin{document}
\author{Arvid Siqveland}
\title{Schemes of Objects in Abelian Categories}

\maketitle

\begin{abstract} In the article \cite{S250} we gave a natural definition of ordinary schemes based on the fact that the localization of a ring in a maximal ideal is a local representation of the corresponding function field. In this text, we replace the category of rings with a general locally small category $\cat C$, we consider a subcategory $\cat B\subset C$ of base-points, and assume that each $X\in\ob\cat C$ that contains $P\in\ob\cat B,$ i.e. there is a morphism $P\rightarrow X,$ there exists a local representing object $X_P.$ Assuming that coproducts exists, we can use the construction of ordinary schemes to construct schemes of objects in any such category.

\end{abstract}

\section{Introduction}

Let $A$ be a commutative ring. Let $\tilde O_X$ be the presheaf on $X=\spec A$ defined by $\tilde O_X(U)=\prod_{\mathfrak p\in U}A_{\mathfrak p},$ and consider the ring homomorphism $\rho=\prod\rho_{\mathfrak p}:A\rightarrow \tilde O_X(U).$ Define the presheaf $O_X$ by letting $O_X(U)$ be the subring of $\tilde O_X(U)$ generated by the subset $\rho(A)$ together with the elements $\rho(a)^{-1}$ whenever $\rho(a)$ is invertible. In the article \cite{S250} we proved that $O_X$ is a sheaf isomorphic to the sheaf of regular functions on $X,$ that is, $\mathcal O_X=O_X.$

In this text, we prove that we can replace the localization of rings, in prime ideals,  with a generalized localization of objects (in some locally small categories), in base-points. When the base-points is a subset of the category, and when there is introduced a topology on that set, we can define a Scheme structure of objects in the category. When the categorical product of the localized objects differ from the cartesian product of the corresponding sets, we choose to call this associative schemes of objects because the points are associated as simple modules over associative rings, see \cite{S244}.

\section{Base Points and Localization}

Let $\cat C$ be a locally small category, i.e., the collections $\mor(C_1,C_2)$ for $C_1,C_2\in\ob\cat C,$ are sets. Fix an object $P\in\ob\cat C.$

\begin{definition}{(The category of $P$-points.)} For an object $X\in\ob\cat C$ we define the category $\cat{Pts}_P(X),$ the category of $P$-points in $X,$ as the category with objects the set $$\ob\cat{Pts}_P(X)=\operatorname{pts}_P(X)=\mor(P,X),$$ and where a morphism from $x:P\rightarrow X$ to $y:P\rightarrow X$ is a commutative diagram $$\xymatrix{X\ar[rr]^\phi&&X\\&P\ar[ul]^x\ar[ur]_y&}$$
\end{definition}

Let $B\subseteq\ob\cat C$ be a sub-collection of the objects in $\cat C$ which we will call base-points. We extend the definition above.

\begin{definition}{(The category of $B$-points.)} For an object $X\in\ob\cat C$ we define the category $\cat{Pts}_B(X),$ the category of base-points in $X,$ as the category with objects the set $$\ob\cat{Pts}_B(X)=\operatorname{pts}_B(X)=\coprod_{P\in B}\mor(P,X),$$ and where a morphism from $x:P_1\rightarrow X$ to $y:P_2\rightarrow X$ is a commutative diagram $$\xymatrix{X\ar[r]^\phi&X\\P_1\ar[u]^x\ar[r]&P_2\ar[u]_y}$$
\end{definition}

We notice that $\cat{Pts}_B(X)$ is a concrete category. Consider the category $\cat C$ with fixed base points $B\subseteq\ob\cat C.$ For each object $C$ we want to fix a unique base-point, if it exists, $c\in\pts_B(C).$ If we define $E(B)$ as the collections of $D\in\ob\cat C$ such that $\pts_B(D)=\emptyset,$ this is the function $f:\ob\cat C\setminus E(B)\rightarrow\underset{C\in\ob\cat C\setminus E(B)}\coprod\pts_B(C)$ giving each object $C$  a fixed base-point.

\begin{definition}{(Subcategories of base-pointed objects.)}
Let $\overline{\cat C}_B$ denote the category with objects $\{(C,c)|C\in\ob\cat C,\pts_B(C)\neq\emptyset, c=f(C)\in\operatorname{pts}_B(C)\}$ and with morphisms $\phi:(C,c)\rightarrow (D,d)$ the commutative diagrams \begin{equation}\label{diag1}
\xymatrix{C\ar[r]^\phi&D\\P_1\ar[u]^c\ar[r]&P_2\ar[u]_d}\end{equation} We will call any full subcategory $\cat C_B\subseteq\overline{\cat C}_B$ a base-pointed subcategory of $\cat C.$
\end{definition}

In a commutative diagram (\ref{diag1}) we will use the notation $\phi(c)=d.$
Given an object $X\in\ob\cat C$ and a base-point  $x_P:P\rightarrow X, P\in B.$ Let $\cat C_B$ be a base-pointed subcategory of $\cat C,$ and consider the contravariant functor $F:\cat C_B\rightarrow\sets$ defined by $F(Y,y_Q)=\{\rho:Y\rightarrow X|\rho(y_Q)=x_P\}.$ This functor is represented by $((X_x,x_R),\rho)$ if the morphism $\rho$ fitting in the below diagram  is unique in the sense given in Definition \ref{repdef}.
$$\xymatrix{X_x\ar[r]^\rho&X\\R\ar[u]^{x_R}\ar[r]&P\ar[u]_{x_P}}$$

\begin{definition}\label{repdef} Let $X\in\ob\cat C$ and $x_P:P\rightarrow X.$ Then the localization of $X$ in $x,$ if it exists, is defined as a base-pointed object $(X_x,x_R)\in\ob\cat C_P$ characterized by the following universal property: There is a morphism $\rho:X_x\rightarrow X$ such that $\rho(x_R)=x_P,$ and if  $(L,l_Q)$ is a base-pointed object with a morphism $\gamma:L\rightarrow X$ such that $\gamma(l_Q)=x_P,$ there exists a unique morphism $\kappa:L\rightarrow X_x$ such that $\kappa(l_Q)=x_R.$
\end{definition}

We notice that the existence of localization in a $P$-point is dependent on the choice of a base-pointed subcategory, sometimes called localizing subcategories by several authors.

\begin{example}\label{Groupexample} In the category $\cat{Grps}$ of groups we fix the free (abelian) group on one element $P=\langle x\rangle.$ For every group $G$ we have a bijection $G\simeq\mor(P,G)$ as every group-homomorphism $\phi:P\rightarrow G$ is determined by its value on $x.$ In this case we consider the  $P$-pointed subcategory with objects $$\ob\cat C_P=\{(\langle x\rangle/\langle x^p\rangle,x)|p\text{ prime}\}.$$
Thus in this example, we let the base-points be the single object $P,$ that is $X=\{P\}.$ A morphism in this subcategory is a homomorphism $f:\langle x\rangle/\langle x^q\rangle\rightarrow\langle x\rangle/\langle x^p\rangle$ implying that $p=q.$ Thus For any group $G$ the localization $G_x$ in $x$ is the group $\langle x\rangle/\langle x^p\rangle$ where $p$ is the smallest prime such that $x^p=1.$ Notice the close connection to $p$-groups.
\end{example}

\begin{example} 
Let $k$ be a field and let $\cat{CAlg}_k$ denote the category of commutative $k$-algebras. As usual, we choose to consider the opposite category $\cat{CAlg}_k^o,$ 
and fix the object $P=k.$ Again, we let the base-points be the set $\{P\}$ with one element. As $P$-pointed subcategory we choose the category of all local $k$-algebras $A_\mathfrak m$ such that $A/\mathfrak m\simeq k.$
Then the localization of $A$ in $x:A\rightarrow k, \mathfrak m=\ker x,$ is a local algebra $A_{\mathfrak m}$ with maximal ideal $\mathfrak n,$ characterized by the following universal property: There exists a homomorphism $\rho:A\rightarrow A_{\mathfrak m}$ such that $\rho^{-1}(\mathfrak n)=\mathfrak m,$ and if there is another local algebra $B$ with maximal ideal $\mathfrak q$ and a homomorphism $\gamma:A\rightarrow B$ such that $\gamma^{-1}(\mathfrak q)=\mathfrak m$ then there is a unique morphism $\phi:A_{\mathfrak m}\rightarrow B$ with $\phi^{-1}(\mathfrak q)=\mathfrak n.$
\end{example}

\begin{example}
Let $k$ be a field and let $\cat{Vec}_k$ be the category of vector spaces over $k.$ As base-points in $\cat{Vec}_k$ we choose the single object $P=k,$ and as base-pointed subcategory, we choose the single object $(k,\id).$ Then the localization of a vector space $V$ in a $P$-point $x_{P}$ is isomorphic to the one-dimensional vector space $\operatorname{Span}(v_p), v_p=x_{P}(1).$ 
\end{example}

\begin{example}
The easiest example is maybe in the category of sets. Then we choose a set with one element $P=\{0\}$ as the only base-point, and the base-pointed subcategory as the sets with exactly one element. Then a localization of a set $S$ in $x_P$ is the one pointed set $\{x_P(0)\}.$
\end{example}

\begin{example}
Let $\cat{Daff}$ be the category of differentiable manifolds diffeomorphic to $\mathbb R^n$ for some $n\in\mathbb N.$ Morphisms in this category are $C^\infty$-functions. As base-points $B$ we choose the manifold $\mathbb R$ and as base-pointed subcategory the one-dimensional manifolds, i.e., diffeomorphic to $\mathbb R.$ The localization of an affine manifold $M$ in a point $p\in M$ is diffeomorphic to $\mathbb R.$
\end{example}

The next example (and Example \ref{Groupexample}) proves the richer structure in algebra.

\begin{example} Let $\cat{CRing}$ be the category of commutative rings with unit. The base-points are formed by the collection of fields $k$, and we let the base-pointed subcategory be the subcategory of local rings $L$ with base point $\kappa:L\rightarrow L/\mathfrak m$ where $\kappa$ is the quotient map and $\mathfrak m$ is the unique maximal ideal in $L.$ For a ring $A$ with base-point $x:A\rightarrow k$ we have $\mathfrak p=\ker x$ and and so there is a homomorphism $\rho:A\rightarrow A_{\mathfrak p}$ with $\rho^{-1}(\mathfrak p A_{\mathfrak p})=\mathfrak p,$ and such that if $B$ is any other local ring with this property, there is a unique morphism $A_{\mathfrak p}\rightarrow B.$ This is the ordinary definition  localization of the ring $A$ in the prime ideal $\mathfrak p.$  
\end{example}

\section{Schemes of Objects}

Let $\cat C$ be a locally small category with a fixed collection of base-points $B\subseteq\ob\cat C$ and a fixed base-pointed subcategory $\cat C_B.$ Assume that for each object $X$ in $\cat C$ the localization $X_x$ in all base-points $x$ exists, and that the localizations belong to a category where direct products and coproducts exist. Finally, we will assume that images and coimages exists in $\cat C.$ This is the case for all abelian categories, in particular in the examples above.

\begin{definition}{Image and coimage.}
Given a morphism $f:X\rightarrow Y.$ Then the image of $f,$ if it exists, is a monomorphism $m:I\rightarrow Y$ satisfying the following universal property:
\begin{itemize}
\item[1)] There exists a morphism $e:X\rightarrow Y$ such that $f=m\circ e.$
\item[2)] For any object $I'$ with a morphism $e':X\rightarrow I'$ and a monomorphism $m':I'\rightarrow Y$ such that $f=m'\circ e',$ there exists a unique morphism $v:I\rightarrow I'$ such that $m=m'\circ v.$
\end{itemize}
The coimage of $f,$ if it exists, is an epimorphism $c:X\rightarrow C$ satisfying the following universal property:
\begin{itemize}
\item[1)] There exists a morphism $m:C\rightarrow Y$ such that $f=m\circ e.$
\item[2)] For any object $C'$ with a epimorphism $e':X\rightarrow C'$ and a morphism $m':C'\rightarrow Y$ such that $f=m'\circ e',$ there exists a unique morphism $v:I'\rightarrow I$ such that $m=v\circ e'=e.$
\end{itemize}
\end{definition}

In the following, we treat the covariant and the contravariant categories as two separate cases, starting with the covariant case.

For any object $X$ in $\cat C,$ let $O(X)=\underset{x\in B}\coprod X_x.$ By the universal property of coproducts, there is a unique morphism $$\gamma:O(X)\rightarrow X,$$ and we put $\mathcal O(X)=C(\gamma)$ where $C(\gamma)$ is the coimage of $\gamma.$ 

In the opposite category, put $O(X)=\underset{x\in B}\prod X_x.$ Then there is a unique morphism $$\gamma:X\rightarrow O(X),$$ and we put $\mathcal O(X)=I(\gamma)$ where $I(\gamma)$ is the image of $\gamma.$

Let $X$ be an object in $\cat C.$ When there is given a morphism $U\rightarrow X$ in $\cat C,$ we have a map of sets $\pts_B(U)\rightarrow\pts_B(X).$ If this last map is injective, we call $U$ a sub object of $X.$ If there is given a topology on $\pts(X)$ and $\pts(U)$ is open in $\pts X,$ we will say that $U$ is an open sub object of $X.$ 

\begin{definition} With the data above, we call $\mathcal O(X)$ the global object of $X$ over $B.$ If $\mathcal O(X)\simeq X,$ we call $X$ an affine object in $\cat C.$ If $\pts_B(X)$ can be given a topology such that $X$ has a covering of open sub-objects $U\subseteq X$ such that $\mathcal O(U)\simeq U,$ we call $X$ a scheme of objects in $\mathcal C.$
\end{definition}

We remark that this definition works for any covering of $X,$ not necessarily a topology.

\section{Associative Schemes of objects}

We consider the locally small category $\cat C$ with a fixed class of base-points, i.e. a fixed base-pointed subcategory $\cat C_B$ of $\cat C.$ 

From our earlier work, e.g. \cite{ELS17}, it is clear that in general, if $M=\{x_I\}_{i=1}^r$ is a class of $r>1$ different base points, the existence of localizations $X_x$ for each $x\in M$ does not ensure the existence of the categorical product $\underset{x\in M}\prod X_x.$

By $E^r(B)$ denote those objects in $\cat C$ which do not contain $r$ different $B$-points.

\begin{definition}{The category of $r$-pointed objects.} Let $\overline{\cat C}_B^{\leq\infty}$ denote the category with objects $\{(C,M)|C\notin E^r(B),M=\{x_i\}_{i=1}^r\subseteq B\},$ and with morphisms $$(C,\{x_i\}_{i=1}^r)\rightarrow (D,\{y_i\}_{i=1}^s),\ s\leq r,$$ the commuting diagrams $$\xymatrix{C\ar[r]&D\\\{x_i\}_{i=1}^s\ar[r]_{\iota}\ar[u]&\{y_i\}_{i=1}^s\ar[u]}$$ with $\iota$ a bijection. We call any full subcategory $\cat C^{\leq\infty}_B$ of $\overline{\cat C}^{\leq\infty}_B$ a multi-pointed subcategory of $\cat C.$
\end{definition}

\begin{definition}
Let $X\in\ob\cat C$ and $M=\{x_i\}_{i=1}^r\subseteq \pts_B(X).$ Then the localization of $X$ in $M$ if it exists, is defined as an $r$-pointed object $(X_M,\{m_i\}_{i=1}^r)\in\ob\cat C_B$ characterized by the following universal property: There is a morphism $\rho:X_M\rightarrow X$ such that $\rho(m_i)=x_i,\ 1\leq i\leq r,$ and if  $(L,\{l_i\}_{i=1}^r)$ is an $r$-pointed object with morphisms $\gamma:L\rightarrow X$ such that $\gamma(l_i)=x_i,\ 1\leq i\leq r,$ there exists a unique morphism $\kappa:L\rightarrow X_M$ such that $\kappa(l_i)=m_i,\ 1\leq i\leq r.$ 
\end{definition}

Again we treat the covariant and the contravariant categories as two separate cases, starting with the covariant case.

\begin{lemma} Let $X\in\ob C$ and order the finite sets $M=\{x_i\}_{i=1}^r\subseteq B$ by inclusion. Then $$\underset{\underset{M\subseteq\pts_B(X)}\longrightarrow}\lim X_M=\underset{M\subseteq\pts_B(X)}\coprod X_M$$
\end{lemma}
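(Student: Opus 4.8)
\textit{Plan.} The plan is to realise the left-hand colimit as a \emph{filtered} colimit of finite coproducts of single-point localizations, and then to invoke the standard fact that an arbitrary coproduct is reassembled from its finite sub-coproducts by exactly such a colimit. First I would record that the collection $\mathcal P$ of finite subsets $M\subseteq\pts_B(X)$, ordered by inclusion, is a directed poset: it is nonempty and any $M_1,M_2\in\mathcal P$ have the common upper bound $M_1\cup M_2\in\mathcal P$. Consequently $\varinjlim_M X_M$ is a filtered colimit and may be computed as the evident quotient of $\coprod_M X_M$.

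Next I would organise the $X_M$ into a diagram over $\mathcal P$. For $M\subseteq M'$ the coproduct $X_M\amalg\bigl(\coprod_{x\in M'\setminus M}X_x\bigr)$ is, via the coprojections, an $|M'|$-pointed object equipped with a canonical morphism to $X$ induced by the structure maps $\rho$; applying the universal property of the localization of $X$ in $M'$ and then pre-composing with the coprojection of the $X_M$-factor produces a morphism $t_{M,M'}\colon X_M\to X_{M'}$, and the uniqueness clauses force $t_{M,M}=\id$ and $t_{M',M''}\circ t_{M,M'}=t_{M,M''}$, so that $M\mapsto X_M$ is a functor on $\mathcal P$. Separately, for each $M$ the coproduct $\coprod_{x\in M}X_x$ of the single-point localizations $X_x:=X_{\{x\}}$ is an $|M|$-pointed object over $X$, so the universal property of $X_M$ yields a canonical morphism $\kappa_M\colon\coprod_{x\in M}X_x\to X_M$; the heart of the matter is the claim that each $\kappa_M$ is an isomorphism and that, under these isomorphisms, the $t_{M,M'}$ become the coproduct coprojections.

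Granting that, the lemma is formal: replacing each $X_M$ by $\coprod_{x\in M}X_x$, a cocone on $\{X_M\}_{M\in\mathcal P}$ with vertex $Z$ is the same datum as a family $\{X_x\to Z\}_{x\in\pts_B(X)}$ compatible with the coprojections, hence the same datum as a single morphism $\coprod_{x\in\pts_B(X)}X_x\to Z$; therefore $\varinjlim_M X_M\cong\coprod_{x\in\pts_B(X)}X_x$, which is the coproduct on the right-hand side.

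I expect the real obstacle to be the invertibility of $\kappa_M$. The localization $X_M$ is pinned down by a terminal, limit-type universal property among pointed objects mapping to $X$, whereas $\coprod_{x\in M}X_x$ is governed by a colimit-type property, so constructing an inverse $X_M\to\coprod_{x\in M}X_x$ uses in an essential way the standing hypotheses: that $\cat C$ is covariant, that coproducts and coimages exist, and that the fixed multi-pointed subcategory is stable under the coproducts in play --- in particular, that forming $\coprod_{x\in M}X_x$ neither creates nor destroys the relevant $B$-points, so that its distinguished points and its map to $X$ are the expected ones. Once $\kappa_M$ is shown to be an isomorphism, directedness of $\mathcal P$ together with the description of a coproduct as the filtered colimit of its finite sub-coproducts completes the proof; the contravariant case is handled dually, with coproducts and colimits replaced throughout by products and limits.
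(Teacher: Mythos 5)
Your plan hinges entirely on the claim that the canonical comparison map $\kappa_M\colon\coprod_{x\in M}X_x\to X_M$ is an isomorphism, and you correctly identify this as the real obstacle --- but you leave it unproved, and in the setting of this paper it is not true. The localization $X_M$ is terminal among $|M|$-pointed objects over $X$ hitting the points of $M$; for $\coprod_{x\in M}X_x$ to be terminal one would need, for every such $(L,\{l_i\})$, a \emph{unique} point-preserving map $L\to\coprod_{x\in M}X_x$, whereas the universal properties of the single-point localizations only give $|M|$ separate maps $L\to X_{x_i}$ with no canonical way to merge them into one map to the coproduct. More to the point, the entire purpose of Section~4 (and of the cited noncommutative deformation theory) is that the multi-pointed localization $X_M$ carries strictly more information than the family of single-point localizations; the paper stresses that the categorical product of the $X_x$ need not even exist while $X_M$ does, and the ``associative'' theory would collapse onto Section~3 if every $\kappa_M$ were invertible. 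So the central step of your argument fails in exactly the examples the lemma is designed for. (Your construction of the transition maps $t_{M,M'}$ via auxiliary coproducts has a related soft spot: the uniqueness clause pins down the map out of $X_M\amalg\bigl(\coprod_{x\in M'\setminus M}X_x\bigr)$, not its composite with one coprojection, so functoriality does not follow ``for free.'')

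There is a second, independent mismatch: even granting invertibility of every $\kappa_M$, your cocone computation yields $\varinjlim_M X_M\cong\coprod_{x\in\pts_B(X)}X_x$, a coproduct indexed by single base-points, whereas the right-hand side of the lemma is $\coprod_{M\subseteq\pts_B(X)}X_M$, a coproduct indexed by all finite subsets $M$, in which each $X_x$ would occur once for every finite set containing $x$. These are different objects, so your proposal proves a different identity from the one stated. For comparison, the paper's own (very terse) proof never decomposes $X_M$ into single-point localizations at all: it directly compares the cocone $(i_M)$ defining the coproduct with the cocone $(j_M)$ defining the directed colimit and asserts a unique isomorphism ``taking eventual unions of finite subsets.'' Whatever gaps that argument itself may have, it is a direct comparison of the two universal properties over the index set of finite $M$, and your route --- reduction to $\coprod_x X_x$ --- is not available here.
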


\begin{proof} By definition of the coproduct, for every $M,$ there is a morphism $i_M:X_M\rightarrow \underset{M\subseteq\pts_B(X)}\coprod X_M.$ Also, by definition of the inductive limit, for every $M$ there is a morphism $j_M:X_M\rightarrow\underset{\underset{M\subseteq\pts_B(X)}\longrightarrow}\lim X_M.$ Taking eventual unions of finite subsets, there exists a unique isomorphism $\underset{M\subseteq\pts_B(X)}\coprod X_M\rightarrow\underset{\underset{M\subseteq\pts_B(X)}\longrightarrow}\lim X_M$
\end{proof}

We remark that the lemma says that the coproduct of the $X_M$ is the injective limit over finite sets $M$ partially ordered by inclusion, and that the product is the corresponding projective limit.

For any object $X$ in $\cat C,$ let $O(X)=\underset{\underset{M\text{ finite}}{M\subseteq\pts_B(X)}}\coprod X_M.$ By the universal property of coproducts, there is a unique morphism $$\gamma:O(X)\rightarrow X,$$ and we put $\mathcal O(X)=C(\gamma)$ where $C(\gamma)$ is the coimage of $\gamma.$ 

In the opposite category, put $O(X)=\underset{\underset{M\text{ finite}}{M\subseteq \pts_B(X)}}\prod X_M.$ Then there is a unique morphism $$\gamma:X\rightarrow O(X),$$ and we put $\mathcal O(X)=I(\gamma)$ where $I(\gamma)$ is the image of $\gamma.$

Let $X$ be an object in $\cat C.$ When given a morphism $U\rightarrow X$ in $\cat C$ we have a map of sets $\pts_B(U)\rightarrow\pts_B(X).$ If this last map is injective, we call $U$ a sub object of $X.$ If there is given a topology on $\pts(X)$ and $\pts(U)$ is open in $\pts X,$ we will say that $U$ is an open sub object of $X.$ 

\begin{definition} With the data above, we call $\mathcal O(X)$ the global object of $X$ over $B.$ If $\mathcal O(X)\simeq X,$ we call $X$ an affine object in $\cat C.$ If $\pts_B(X)$ can be given a topology such that $X$ has a covering of open sub-objects $U\subseteq X$ such that $\mathcal O(U)\simeq U,$ we call $X$ a scheme of objects in $\mathcal C.$
\end{definition}

\end{document}